\newtheorem{theorem}{Theorem}
\newtheorem{proposition}{Proposition}
\newtheorem{definition}{Definition}
\DeclareMathOperator{\cone}{cone}
\DeclareMathOperator{\inte}{int}
\newcommand{\lng}{\lf\langle}
\newcommand{\rng}{\rg\rangle}
\newcommand{\lf}{\left}
\newcommand{\rg}{\right}
\newcommand{\R}{\mathbb R}
\newcommand{\bs}{\begin{smallmatrix}}
\newcommand{\es}{\end{smallmatrix}}
\newcommand{\tp}{^\top}
\newcommand{\pdflatex}
\begin{document}
\title{Conic optimization and complementarity problems}
\author{S. Z. N\'emeth\\
 University of Birmingham, School of Mathematics,
 \\Watson Building, Edgbaston, Birmingham B15 2TT,\\
 \and Guohan Zhang\\
University of Birmingham, School of Mathematics,
 \\Watson Building, Edgbaston, Birmingham B15 2TT }

\maketitle

\begin{abstract}
	Although the Karush-Kuhn-Tucker conditions suggest a connection between a conic optimization problem and a complementarity problem, it
	is difficult to find an accessible explicit form of this relationship in the literature. This note will present such a relationship.
\end{abstract}

\section{Introduction}

Although the Karush-Kuhn-Tucker (KKT) conditions suggest a connection between constrained optimization and complementarity problems, it
is difficult to find this connection written explicitly and explained in a perspicuous way, easily accessible to beginners of the field as well.
The connection is more in the domain of the mathematical folklore, assuming that it should be clear that the complementary slackness condition
corresponds to a complementarity problem. The aim of this short note is to present this duality between optimization problems and complementarity
problems in a more clear-cut way. Due to the recent development of conic optimization and the applications of cone-complementarity problems, it 
is desirable to make this connection for more general cones, while still keeping it accessible to a wider audience. Especially because
apparently all applications of cone-complementarity problems defined by cones essentially different from the nonnegative orthant are based on 
this correspondence. There are several such applications in physics, mechanics, economics, game theory, robotics 
\cite{ZhangZhangPan2013,YonekuraKanno2012,AghassiBertsimas2006,NishimuraHayashiFukushima2012,LuoAnXia2009,NishimuraHayashFukushima2009,AndreaniFriedlanderMelloSantos2008,KoChenYang2011,ChenTseng2005}.

The concept of complementarity occurs naturally in the dual of an optimization problem. The simplest such complementarity 
property occurs in the dual of a classical solvable linear programming problem \cite{BTN2001}. Define a linear programming problem as
\begin{equation}\label{LPprimal}\tag{$\mathcal{P}$}
\begin{array}{l}
	\min\text{ }c^T x  \\
	\text{subject to }  Ax \geq b,
\end{array}
\end{equation}
while its dual problem as
\begin{equation}\label{LPdual}\tag{$\mathcal{D}$}
	\begin{array}{ll}
		\max\text{ }b^T y  \\
		\text{subject to }  & A^Ty = c, \\
				    & y \in \mathbb{R}^n_+. 
\end{array}
\end{equation}
Then \eqref{LPprimal} is solvable if and only if \eqref{LPdual} is solvable. The duality gap
\begin{equation*}
c^Tx- b^Ty = y^TAx -y^Tb=y^T(Ax-b) \geq 0
\end{equation*}
will be nonnegative for any feasible $(x,y)$ . If $x^*$, $y^*$ are feasible solutions of \eqref{LPprimal} and \eqref{LPdual}, respectively, then 
their optimality is equivalent to the complementary slackness, that is, to
\begin{equation*}
{y^*}^T(Ax^* -b )=0.
\end{equation*}
An extension to this property can be obtained by replacing $\geq$ in the constrained condition with $\geq_K 0$ induced by some
cone $K$. This change give rise to the linear conic programming problem defined by
\begin{equation}\label{LCPprimal}\tag{$\mathcal{CP}$}
	\begin{array}{l}
		\min\text{ }c^T x  \\
		\text{subject to }  Ax-b \succeq_K 0,
	\end{array}
\end{equation}
and with the dual
\begin{equation}\label{LCPdual}\tag{$\mathcal{CD}$}
	\begin{array}{ll}
		\max\text{ }b^T y & \\
		\text{subject to } & A^T y = c, \\
				   & y  \succeq_{K^*} 0.
	\end{array}
\end{equation}
The complementary slackness property from linear programming needs some adjustment to remain valid in this case. First, the strict feasibility 
and boundedness form below (from above) of the primal problem (dual problem) implies the solvability of the dual problem (primal problem). In 
this case the optimal values of the two problems are equal to each other. With these properties in mind we formulate now the complementary 
slackness property for the linear conic programming problem: Assume that at least one of the problems \eqref{LCPprimal} and \eqref{LCPdual}
is bounded and strictly feasible. Then, the optimality of a feasible pair $(x^*,y^*)$ of solutions of \eqref{LPprimal} and \eqref{LPdual}, 
respectively, is equivalent to the complementary slackness, that is, to
\begin{equation*}
{y^*}^T(Ax^* -b )=0.
\end{equation*}
When $K=\mathbb{R}^n_+$, the problems \eqref{LCPprimal}, \eqref{LCPdual} reduce to \eqref{LPprimal}, \eqref{LPdual}, with the corresponding
stronger complementary slackness property, which in general does not hold. After this ``appetizer'' about the connections between optimization 
and complementarity let us proceed to presenting our main result, that is Theorem \ref{mth}, which will exhibit a more explicit connection 
between general conic optimization problems and complementarity problems. The main result will be based on some preliminary concepts and
properties presented in the next section. We will use a variant of the Karush-Kuhn-Tucker theorem \cite{KKT1951} presented in 
\cite{RA2006} about the complementarity property of the optimal points under Slater's condition.
 

\section{Preliminaries}

Consider the $n$-dimensional Euclidean space $\R^n$ whose elements are assumed to be $n\times 1$ column vectors and its canonical scalar product 
$\lng\cdot,\cdot\rng$ is defined by $\lng x,y\rng=x\tp y$. The vector $x$ is called \emph{perpendicular} to the vector $y$ if $\lng x,y\rng=0$,
which will also be denoted by $x\perp y$.

In this section we will recall the standard notions of a cone, dual of a cone, complementarity problem, mixed complementarity problem and conic
optimization problem in $\R^n$.


Recall that a \emph{closed convex cone} is closed convex set which is invariant under the multiplication of vectors by positive scalars. For 
simplicity in this note a closed convex cone will be simply called \emph{cone}. The \emph{dual} of a cone $K$ is the cone $K^*$ defined by

\begin{equation*}
K^*=\{d \in \mathbb{R}^n\mid v^Td \geq 0, \text{ } \forall v \in K \}
\end{equation*}

The cone \[\R^n_+=\{x=(x_1,\dots,x_n)\tp\in\R^n\mid x_1\ge0,\dots,x_n\ge0\}\] is called the \emph{nonnegative orthant} and $(\R^n_+)^*=\R^n_+$.

The cone $K$ induces the relations $\le_K$ and $\ge_K$ defined by $x\le_K y\iff y-x\in K$ and  $x\ge_K y\iff x-y\in K$, respectively. If 
$K=\R^n_+$, then the relations  $\le_K$ and $\ge_K$ will simply be denoted by $\le$ and $\ge$, respectively. The relation $\le_K$ is reflexive,
transitive and compatible with the linear structure of $\R^n$, that is, $x\le_K y$ implies $\lambda x+z\le_K\lambda y+z$, for any $\lambda>0$
real number and any $z\in\R^n$.

For an arbitrary set $C\subseteq\R^n$ we will denote by $\cone(C)$ the smallest cone containing $C$. More precisely, $\cone(C)$ is defined by 
\begin{enumerate}
	\item $C\subseteq\cone(C)$,
	\item $\cone(C)\subseteq K$, for any cone $K$ with $C\subseteq K$.
\end{enumerate}
The cone $\cone(C)$ is called the cone \emph{generated by $C$}. 

\begin{definition}\label{CP}
	Given a cone $K$ and a mapping $F:\R^n\to\mathbb{R}^n$, the \emph{complementarity problem} $CP(K,F)$ is to find a vector 
	$x \in \mathbb{R}^n$ which satisfies the following conditions:
		\begin{equation}
			K\ni x\perp F(x)\in K^*.
		\end{equation}
\end{definition}

If $K=\R^n_+$, then $CP(K,F)$ will be simply denoted by $CP(F)$.


\begin{definition}\label{micp}
	Let $G$ and $H$ be two mappings from $\R^p\times\R^q$ into $\R^p$ and $\R^q$, respectively, where $p+q=n$ and let $C\subset\R^q$ be a 
	cone. The \emph{mixed complementarity problem} $MiCP(G,H,C,p,q)$ is to find a pair of vectors $(u,v)\in\mathbb{R}^{p} \times C$ such that
	\begin{equation}
		\begin{aligned}
			& G(u, v) = 0,
			& C\ni v \perp H(u, v) \in C^*.
		\end{aligned}
	\end{equation}
\end{definition}

If $C=\R^q_+$, then we simply denote $MiCP(G,H,C,p,q)$ by\linebreak $MiCP(G,H,p,q)$, which is the classical mixed complementarity problem defined
in \cite{FP2003}.

It can be easily seen that the \emph{mixed complementarity problem}\linebreak $MiCP(G,H,C,p,q)$ is equivalent to the complementarity problem 
$CP(F,K)$, where $F:\R^n\to\R^n$ is defined by $F(x)=(G(x),H(x))$ and $K=\R^p\times C$ (with the trivial identification between $\R^p\times\R^q$
and $\R^{p+q}$). We also remark that any complementarity problem $CP(T,C)$ in $\R^q$ can be viewed as the mixed complementarity problem 
$MiCP(G,H,C,p,q)$, where $G:\R^p\times\R^q\to\R^p$ is the identically zero mapping and $H:\R^p\times\R^q\to\R^q$ is defined by $H(u,v)=T(v)$.

\begin{definition}
Given a convex set $X$ and a point $x \in X$ the set
\begin{equation}
K_X(x) = \cone(X-x)
\end{equation}
 is called the \emph{cone of feasible directions} of $X$ on $x$.
\end{definition}

\begin{definition}
Consider a closed and convex set $X\subseteq\mathbb{R}^n $ and a point $x\in X$. The set
\begin{equation}
	N_X(x)=-[\cone(X - x)]^*
\end{equation}
is called the \emph{normal cone} of $X$ at $x$.
\end{definition}
It is easy to conclude from the definition that $v \in N_X(x)$ if and only if 
\begin{equation}
\langle v, y-x \rangle \leq 0
\end{equation}
for all $y \in X$. Moreover, if $X$ is a closed convex cone in $\mathbb{R}^n$, then
\begin{equation}\label{ncone}
N_X(x)=
(-X^*)\cap x^\perp
\end{equation}
where $x^\perp=\{y\in\R^n:y\perp x\}$ denotes the orthogonal complement of $x$. Indeed, if 
$v\in(-X^*)\cap x^\perp$, for any $y \in X$, then 
$\langle v, y-x \rangle = \langle v, y \rangle - \langle v, x \rangle = \langle v, y \rangle \leq 0$, hence $v\in N_X(x)$.

Conversely, if $v \in N_X(x)$, then by taking $y=(1/2)x \in X$ and $y=2x \in X$, we get 
$\langle v, x \rangle \leq 0 \leq \langle v, x \rangle$, so $v \perp x$. Thus, for any $y \in X$,$\langle v, y-x \rangle = \langle v, y \rangle
\leq 0$, hence $v \in -X^*$. In conclusion, $v\in (-X^*)\cap x^\perp$. 

Consider the nonlinear optimization problem 
\begin{equation}\label{nlp}
	\begin{array}{lcl}
\min f(x) && \\
\text{subject to} & g_i(x) \leq 0,\text{ }i=1,\dots,m, & \\
& h_i(x) = 0,\text{ }i = 1, \dots , p, & \\
& x \in X_0 ,&
\end{array}
\end{equation}
where $p>0$, the function $f : \mathbb{R}^n \mapsto \mathbb{R}$, $g_i : \mathbb{R}^n \mapsto \mathbb{R}$, $i= 1, \dots, m$, and 
$h_i : \mathbb{R}^n \mapsto \mathbb{R}$, $i= 1, \dots, p$ are continuously differentiable, and that the set $X_0 \subseteq \mathbb{R}^n$ is 
convex and closed.

Before stating the next theorem, we need to recall Slater's condition. We say that \emph{Slater's condition} hold for problem \eqref{nlp} if 
there exists a point $x^s \in X_0$ such that $g_i(x^s) <0$, $i =1, \dots, m$, $h_i(x^s) =0$, $i =1, \dots, p$, and $x^s \in \inte X_0$.

\begin{theorem}\emph{\cite{RA2006}} Assume that $\hat{x}$ is a local minimum of problem \eqref{nlp}, the function $f$ is continuous at some feasible point $x_0$, and
	Slater's condition is satisfied. Then there exist $\hat{\lambda} \in \mathbb{R}^n_+$ and $\hat{\mu} \in \mathbb{R}^p$ such that
	\begin{equation}\label{kkts1}
		0 \in \nabla f(\hat{x}) + \sum_{i=1}^m \hat{\lambda}_i \nabla g_i(\hat{x}) + \sum_{i=1}^p \hat{\mu}_i \nabla h_i(\hat{x})+ 
		N_{X_0}(\hat{x})
	\end{equation}
	and
	\begin{equation}\label{kkts2}
		\hat{\lambda}_i \nabla g_i(\hat{x})=0 , i =1, \dots, m. 
	\end{equation}
	Conversely, if for some feasible point $\hat{x}$ of \eqref{nlp} and some $\hat{\lambda} \in \mathbb{R}^m_+$ and $\hat{\mu} \in
	\mathbb{R}^p$ conditions \eqref{kkts1} and \eqref{kkts2} are satisfied, then $\hat{x}$ is the global minimum of problem \eqref{nlp}.
\end{theorem}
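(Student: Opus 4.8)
\emph{Proof proposal.} This is the convex form of the Karush--Kuhn--Tucker theorem; in particular, as the converse half makes clear, $f$ and the $g_i$ are to be read as convex and the $h_i$ as affine (with $X_0$ closed and convex). Under that reading a local minimum is automatically global, so $\hat x$ minimizes $f$ over the convex feasible set $X=\{x\in X_0:g_i(x)\le0,\ i=1,\dots,m;\ h_i(x)=0,\ i=1,\dots,p\}$. The plan is to prove the forward implication by combining (a) the elementary first-order condition for minimizing a differentiable convex function over a convex set with (b) an explicit formula for the normal cone $N_X(\hat x)$; Slater's condition is used only in (b), as the constraint qualification that makes the formula valid.

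For (a): since $f$ is differentiable, $\hat x$ minimizes $f$ over $X$ if and only if $-\nabla f(\hat x)\in N_X(\hat x)$ (equivalently $\langle\nabla f(\hat x),y-\hat x\rangle\ge0$ for all $y\in X$); the hypothesis that $f$ be continuous at a feasible point is present precisely to license the convex calculus used in (b), and, $f$ being $C^1$, its subdifferential at $\hat x$ is the singleton $\{\nabla f(\hat x)\}$, so no multiplier on the objective arises. For (b) the target is
\[
N_X(\hat x)=N_{X_0}(\hat x)+\cone\{\nabla g_i(\hat x):g_i(\hat x)=0\}+\spa\{\nabla h_i(\hat x):i=1,\dots,p\};
\]
once this is in hand, substituting it into $-\nabla f(\hat x)\in N_X(\hat x)$ and rearranging gives \eqref{kkts1}, while \eqref{kkts2} comes for free, an inactive constraint receiving multiplier $\hat\lambda_i=0$.

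The normal cones of the individual factors are routine: $N_{\{h_i=0\}}(\hat x)=\R\,\nabla h_i(\hat x)$ because $h_i$ is affine, while for convex $g_i$, $N_{\{g_i\le0\}}(\hat x)$ is $\{0\}$ when $g_i(\hat x)<0$ and the ray $\{\lambda\nabla g_i(\hat x):\lambda\ge0\}$ when $g_i(\hat x)=0$ --- here $\nabla g_i(\hat x)\ne0$ for an active constraint, since otherwise convexity would force $g_i\ge0$, contradicting the Slater point $x^s$. The main obstacle is the additivity $N_X(\hat x)=N_{X_0}(\hat x)+\sum_iN_{\{g_i\le0\}}(\hat x)+\sum_iN_{\{h_i=0\}}(\hat x)$: the inclusion ``$\supseteq$'' is immediate from the definition of the normal cone, and for ``$\subseteq$'' I would argue by separation. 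Given $v\in N_X(\hat x)$, the linearized cone at $\hat x$ --- feasible directions of $X_0$ intersected with $\langle\nabla g_i(\hat x),\cdot\rangle\le0$ for active $i$ and $\langle\nabla h_i(\hat x),\cdot\rangle=0$ --- is contained in $\{d:\langle v,d\rangle\le0\}$; since the direction $d^s:=x^s-\hat x$ satisfies $\langle\nabla g_i(\hat x),d^s\rangle\le g_i(x^s)<0$ for every active $i$ while killing the equality linearizations, a Gordan/Farkas-type alternative (equivalently, a separating hyperplane in the product space of constraint values) produces nonnegative $\hat\lambda_i$, real $\hat\mu_i$ and $w\in N_{X_0}(\hat x)$ with $v=w+\sum_i\hat\lambda_i\nabla g_i(\hat x)+\sum_i\hat\mu_i\nabla h_i(\hat x)$; applying this to $v=-\nabla f(\hat x)$ finishes the forward direction. (Equivalently, one may route the argument through Lagrangian duality: Slater's condition gives zero duality gap with the dual attained, whence the conclusion follows from the saddle-point property --- but a Hahn--Banach separation is the engine either way, and checking that $x^s$ supplies the strict feasibility needed to run it is the real content.)

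The converse uses only convexity. Given feasible $\hat x$ satisfying \eqref{kkts1}--\eqref{kkts2}, set $v:=-\nabla f(\hat x)-\sum_i\hat\lambda_i\nabla g_i(\hat x)-\sum_i\hat\mu_i\nabla h_i(\hat x)$, so $v\in N_{X_0}(\hat x)$ by \eqref{kkts1}. For any feasible $x$, convexity of $f$ gives $f(x)-f(\hat x)\ge\langle\nabla f(\hat x),x-\hat x\rangle$, and expanding the right-hand side through the definition of $v$: the $X_0$-term $\langle v,x-\hat x\rangle\le0$ since $v\in N_{X_0}(\hat x)$ and $x\in X_0$; each $h_i$-term vanishes because $h_i$ is affine and both points are feasible; and each $g_i$-term obeys $\hat\lambda_i\langle\nabla g_i(\hat x),x-\hat x\rangle\le\hat\lambda_i(g_i(x)-g_i(\hat x))=\hat\lambda_ig_i(x)\le0$ by convexity of $g_i$, the complementarity $\hat\lambda_ig_i(\hat x)=0$, and $\hat\lambda_i\ge0\ge g_i(x)$. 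Hence $\langle\nabla f(\hat x),x-\hat x\rangle\ge0$ and $f(x)\ge f(\hat x)$, so $\hat x$ is a global minimum of \eqref{nlp}. I expect the separation argument in the forward direction to be the only nontrivial ingredient; the rest is bookkeeping with the normal-cone definition and elementary convexity inequalities.
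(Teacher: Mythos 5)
The paper offers no proof of this statement at all --- it is imported verbatim from \cite{RA2006} as a quoted tool --- so there is no in-paper argument to compare against, and your proposal has to stand on its own. Your reading of the implicit hypotheses is the right one ($f$ and the $g_i$ convex, $h_i$ affine, $X_0$ closed convex, and \eqref{kkts2} understood as the complementary slackness $\hat\lambda_i g_i(\hat x)=0$ despite the typo in the displayed formula), and your converse direction is complete and correct: expanding $\lng\nabla f(\hat x),x-\hat x\rng$ through \eqref{kkts1}, using $v\in N_{X_0}(\hat x)$, affinity of the $h_i$, and $\hat\lambda_i\lng\nabla g_i(\hat x),x-\hat x\rng\le\hat\lambda_i g_i(x)\le 0$ is exactly the standard argument, and every inequality you invoke is justified.

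The forward direction, however, is a plan rather than a proof, and the two steps you leave unexecuted are precisely the content of the theorem. First, the inclusion of the linearized cone (feasible directions of $X_0$ intersected with $\lng\nabla g_i(\hat x),d\rng\le0$ for active $i$ and $\lng\nabla h_i(\hat x),d\rng=0$) in the halfspace $\{d:\lng v,d\rng\le0\}$ is not automatic from $v\in N_X(\hat x)$: $v$ is only known to be nonpositive on directions pointing into $X$, so you must show that every linearized direction is a limit of feasible directions of $X$. This is where the Slater direction enters, via the perturbation $d+\epsilon(x^s-\hat x)$ and a verification that $\hat x+\tau\bigl(d+\epsilon(x^s-\hat x)\bigr)$ lies in $X_0$ and satisfies all constraints for small $\tau>0$; none of this is written. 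Second, passing from that inclusion to the representation $v=w+\sum_i\hat\lambda_i\nabla g_i(\hat x)+\sum_i\hat\mu_i\nabla h_i(\hat x)$ with $w\in N_{X_0}(\hat x)$, $\hat\lambda_i\ge0$, is a bipolar/Farkas step whose naive form only yields membership in the \emph{closure} of $N_{X_0}(\hat x)+\cone\{\nabla g_i(\hat x):g_i(\hat x)=0\}+\spa\{\nabla h_i(\hat x)\}$; you must either prove this sum of cones is closed or run a direct separation argument in which Slater's condition excludes the degenerate (zero objective-multiplier) separating functional. You explicitly flag both issues (``a Gordan/Farkas-type alternative'', ``checking that $x^s$ supplies the strict feasibility \dots is the real content'') but do not carry them out, so as written the forward half is an outline of the standard route rather than a proof of it.
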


In order to prove our main theorem (Theorem \ref{mth}), we need to state the following classical result of convex optimization.
\begin{proposition}\label{optgrad}
	The continuously differentiable function $f:\mathbb{R}^n\to\mathbb{R}$ is convex if and only if
	\begin{equation}\label{descentgrad}
		f(y) \geq f(x) + \lng\nabla f(x), y-x\rng \text{, }\forall  x, y \in \mathbb{R}^n.
	\end{equation}
	Moreover, if $f$ is convex, then $x$ is a minimizer of $f$ if and only if
	\begin{equation}\label{minigrad}
		\lng\nabla f(x), y-x\rng \geq 0 \text{, } \forall y \in \mathbb{R}^n.
	\end{equation}
\end{proposition}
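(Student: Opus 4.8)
The plan is to reduce everything to the one–variable function $\varphi(t)=f\big(x+t(y-x)\big)$, which is continuously differentiable with $\varphi'(0)=\lng\nabla f(x),y-x\rng$ by the chain rule, and then to read off both equivalences from elementary manipulations.

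For the ``only if'' part of the first equivalence, I assume $f$ is convex and fix $x,y\in\R^n$. For every $t\in(0,1]$ convexity gives $f\big(x+t(y-x)\big)=f\big((1-t)x+ty\big)\le(1-t)f(x)+tf(y)$, hence $\f{\varphi(t)-\varphi(0)}{t}\le f(y)-f(x)$. Letting $t\downarrow0$ and using $\varphi'(0)=\lng\nabla f(x),y-x\rng$ yields \eqref{descentgrad}. For the ``if'' part, I assume \eqref{descentgrad} holds, fix $x,y\in\R^n$ and $\lambda\in[0,1]$, and put $z=\lambda x+(1-\lambda)y$. Applying \eqref{descentgrad} with $z$ in place of $x$ and with $x$, respectively $y$, in place of $y$, then multiplying the two inequalities by $\lambda$, respectively $1-\lambda$, and adding, the gradient terms cancel because $\lambda(x-z)+(1-\lambda)(y-z)=0$; what remains is $\lambda f(x)+(1-\lambda)f(y)\ge f(z)$, i.e. the convexity of $f$.

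For the second statement, suppose first that $x$ is a minimizer of $f$. Then for every $y\in\R^n$ and $t\in(0,1]$ we have $\f{\varphi(t)-\varphi(0)}{t}\ge0$, and letting $t\downarrow0$ gives \eqref{minigrad} (convexity is not needed for this direction). Conversely, if \eqref{minigrad} holds, then, since $f$ is convex, the inequality \eqref{descentgrad} — already established above — gives $f(y)\ge f(x)+\lng\nabla f(x),y-x\rng\ge f(x)$ for all $y\in\R^n$, so $x$ is a minimizer.

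There is no serious obstacle here: the result is classical and every step is a short manipulation. The only point that needs a word of justification is the passage to the limit in the difference quotients, which is exactly the statement that $t\mapsto f\big(x+t(y-x)\big)$ is differentiable at $0$ with derivative $\lng\nabla f(x),y-x\rng$, a consequence of the assumed continuous differentiability of $f$ together with the chain rule.
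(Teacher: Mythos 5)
Your proof is correct. Note, however, that the paper itself offers no proof of this proposition: it is stated as a classical fact of convex optimization and only used later in the proof of the main theorem, so there is no argument of the authors' to compare yours against. What you wrote is the standard textbook derivation --- reduce to the one-variable function $\varphi(t)=f\bigl(x+t(y-x)\bigr)$, obtain the gradient inequality \eqref{descentgrad} from the difference quotients, recover convexity by applying \eqref{descentgrad} at the point $z=\lambda x+(1-\lambda)y$ and averaging so that the gradient terms cancel, and then read off the minimizer characterization \eqref{minigrad} (with the correct observation that convexity is only needed for the ``if'' direction). All limit passages are justified by the chain rule as you say, so the argument is complete and would serve as a self-contained proof of the proposition the paper leaves unproved.
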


\begin{definition}
	Let $f: \mathbb{R}^q \mapsto \mathbb{R}$ be a function, $K\subset\mathbb{R}^m$ be a cone, $A$ a $p\times q$ matrix and 
	$b\in \mathbb{R}^p$. Then, the problem
	\begin{equation}
		CO(f, A, b , K, p, q): \left\{
		\begin{array}{ll}
			\min f(x) & \\
			\text{subject to } & Ax=b,\\
					   & x\in K.
		\end{array}
		\right.
	\end{equation}
	is called \emph{conic optimization problem}.
\end{definition}
\section{The main result}

In the previous sections, we stated the complementarity problems and the complementarity relation in linear (conic) programming problems. We also presented the Karush-Kuhn-Tucker condition which illustrated the properties of optimal solutions. Based on these results, we will prove the equivalence of a conic optimization problem with a mixed complementarity problem. 
\begin{theorem}\label{mth}
	Let $f: \mathbb{R}^q \mapsto \mathbb{R}$ be a differentiable convex function at $\hat{x} \in \mathbb{R}^q \setminus \{0\}$, $K \subseteq
	\mathbb{R}^q$ be a conic set with smooth boundary, $A$ is a $p \times q$ matrix of full rank and $b \in \mathbb{R}^p$. Suppose that the
	intersection of the interior of $K$ and the linear subspace $\{x \in \mathbb{R}^q : Ax=b\}$ is nonempty. Then, $\hat{x}$ is a solution of 
	$CO(f,A,b,K)$ if and only if $(\hat{y},\hat{x})$ is a solution of $MiCP(G,H,K, p, q)$, where $G(y,x)=b-Ax$, 
	$H(y,x)=\nabla f(x)-A^Ty$, which can be
	written explicitly as
	\begin{equation*}
		A\hat{x}= b, K \ni \hat{x} \perp \nabla f(\hat{x})- A^T\hat{y} \in K^*.
	\end{equation*}
\end{theorem}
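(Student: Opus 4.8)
The plan is to apply the KKT theorem (the variant from \cite{RA2006} quoted above) to the conic optimization problem $CO(f,A,b,K)$, after rewriting it in the form \eqref{nlp}. First I would set $X_0 = K$ — which is legitimate since a (closed convex) cone is closed and convex — take no inequality constraints ($m=0$), and encode $Ax=b$ as the $p$ equality constraints $h_i(x) = (Ax-b)_i = 0$. Slater's condition for this reformulation reduces exactly to the hypothesis that $\inte K \cap \{x : Ax = b\}$ is nonempty, so the KKT theorem applies in both directions: $\hat x$ solves $CO(f,A,b,K)$ if and only if $\hat x$ is feasible and there exists $\hat\mu \in \R^p$ with
\begin{equation*}
	0 \in \nabla f(\hat x) + \sum_{i=1}^p \hat\mu_i \nabla h_i(\hat x) + N_K(\hat x) = \nabla f(\hat x) + A^\top\hat\mu + N_K(\hat x),
\end{equation*}
using $\nabla h_i(\hat x) = A^\top e_i$ so that $\sum \hat\mu_i \nabla h_i(\hat x) = A^\top \hat\mu$. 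Setting $\hat y = -\hat\mu$ this says $A^\top \hat y - \nabla f(\hat x) \in N_K(\hat x)$, equivalently $\nabla f(\hat x) - A^\top \hat y \in -N_K(\hat x)$.

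Next I would translate the normal-cone membership into the complementarity statement using formula \eqref{ncone}: since $K$ is a closed convex cone, $N_K(\hat x) = (-K^*) \cap \hat x^\perp$, hence $-N_K(\hat x) = K^* \cap \hat x^\perp$. Therefore the condition $\nabla f(\hat x) - A^\top \hat y \in -N_K(\hat x)$ is precisely
\begin{equation*}
	\nabla f(\hat x) - A^\top \hat y \in K^*, \qquad \hat x \perp \bigl(\nabla f(\hat x) - A^\top \hat y\bigr).
\end{equation*}
Combining this with primal feasibility $\hat x \in K$ and $A\hat x = b$ gives exactly
\begin{equation*}
	A\hat x = b, \qquad K \ni \hat x \perp \nabla f(\hat x) - A^\top \hat y \in K^*,
\end{equation*}
which is the statement that $(\hat y, \hat x)$ solves $MiCP(G,H,K,p,q)$ with $G(y,x) = b - Ax$ and $H(y,x) = \nabla f(x) - A^\top y$, after checking that $G(\hat y,\hat x)=0$ is the same as $A\hat x=b$ and $H(\hat y,\hat x) = \nabla f(\hat x) - A^\top\hat y$ matches. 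For the converse direction I would run the same chain backwards: a solution of the $MiCP$ gives a feasible $\hat x$ together with $\hat\mu = -\hat y$ satisfying \eqref{kkts1} (with \eqref{kkts2} vacuous since $m=0$), and the converse half of the KKT theorem — which needs convexity of $f$, available by hypothesis — yields that $\hat x$ is a global minimum of $CO(f,A,b,K)$.

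A few technical points need care. One should note that the quoted KKT theorem nominally requires $f, h_i$ to be continuously differentiable on all of $\R^n$ and $p > 0$; here $p > 0$ holds because $A$ has full rank $p \ge 1$ (one may assume $p\ge1$, else the constraint is empty and a trivial variant applies), and the differentiability/convexity of $f$ is assumed only at $\hat x$, which is the only point where \eqref{kkts1}, \eqref{descentgrad} and \eqref{minigrad} are actually invoked — so I would remark that the argument only uses local differentiability and the gradient inequality \eqref{descentgrad} at $\hat x$, in the spirit of Proposition \ref{optgrad}. The hypotheses that $\hat x \ne 0$ and that $K$ has smooth boundary are not needed for this argument and appear to be vestigial (or reserved for a sharper reading of the perpendicularity condition); I would simply not use them, or add one sentence observing they can be dropped. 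The main obstacle — really the only nontrivial point — is verifying that Slater's condition for the reformulated problem is equivalent to the stated interior-intersection hypothesis and that $X_0 = K$ legitimately plays the role of the convex closed constraint set; once that bookkeeping is done, everything else is the substitution $\hat y = -\hat\mu$ together with the already-proved identity \eqref{ncone}.
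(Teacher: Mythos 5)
Your argument is correct, and your forward direction is essentially the paper's: apply the quoted KKT theorem with $X_0=K$, the equality constraints encoding $Ax=b$ (the paper uses $h(x)=b-Ax$ and absorbs the sign into the multiplier, you use $Ax-b$ and set $\hat y=-\hat\mu$, which is the same bookkeeping), check that Slater's condition is exactly the assumed nonempty intersection of $\inte K$ with $\{x:Ax=b\}$, and then convert the inclusion \eqref{kkts1} into the complementarity conditions via the identity \eqref{ncone}. Where you diverge is the converse: you invoke the converse clause of the quoted KKT theorem, whereas the paper does not use it at all and instead gives a short self-contained argument — from $\nabla f(\hat x)-A^T\hat y\in K^*$ and $\hat x\perp(\nabla f(\hat x)-A^T\hat y)$ it gets, for any feasible $x\in K$ with $Ax=b$, that $0\le\langle\nabla f(\hat x)-A^T\hat y,x\rangle=\langle\nabla f(\hat x),x-\hat x\rangle\le f(x)-f(\hat x)$, using orthogonality, $\langle A^T\hat y,x-\hat x\rangle=\langle\hat y,Ax-A\hat x\rangle=0$, and the gradient inequality \eqref{descentgrad} of Proposition \ref{optgrad}. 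Both routes are sound; yours is shorter given the quoted theorem, but it leans on a converse clause that, as transcribed in the paper, silently presupposes convexity of the problem (you correctly flag that convexity of $f$ is what makes it applicable), while the paper's direct argument avoids that dependence, uses only the gradient inequality at $\hat x$, and makes transparent exactly which pieces of the complementarity system are used where. Your side remarks — that Slater's condition is the only nontrivial bookkeeping, and that the hypotheses $\hat x\ne 0$ and smoothness of the boundary of $K$ are never used — are accurate and consistent with the paper's proof, which likewise makes no use of them.
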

\begin{proof}
Let $\hat x$ be a solution of $CO(f, A, b, K)$ and consider the the preceding theorem with $X_0=K$, $h(x)= b-Ax$ and $\hat\lambda = \hat y$. 
Then, equation
\eqref{kkts1} becomes:
\begin{equation}\label{eno}
	0 \in \nabla f(\hat x) - A^T\hat y + N_K(\hat x)
\end{equation}
By \eqref{eno} and \eqref{ncone}, we have that
\begin{equation*}
	\nabla f(\hat x) - A^T\hat y \in K^* \text{ and } \nabla f(\hat x) - A^T\hat y \perp\hat x.
\end{equation*}
Hence, since $\hat x\in K$ and $A\hat x = b$, it follows that $(\hat y, \hat x)$ is a solution of $MiCP(G, H, K)$.

Conversely, suppose that $(\hat y, \hat x)$ is a solution of 
$MiCP(G, H, K)$. For any feasible solution $x$ in $CO(f, A, b, K)$,  we have:
\begin{eqnarray}\label{cccc}
0 \leq \langle \nabla f(\hat x) - A^T\hat y, x \rangle= \langle \nabla f(\hat x) - A^T\hat y, x-\hat x \rangle\nonumber\\ =\langle \nabla f(\hat
x) , x-\hat x \rangle - \langle A^T\hat y, x-\hat x \rangle
\end{eqnarray}
Because $\langle A^Ty, x-\hat x \rangle = \langle y, Ax-A\hat x \rangle=\langle y,b-b\rangle=0$, by the convexity of $f$, inequality 
\eqref{cccc} and Proposition \ref{optgrad}, we have
\begin{equation*}
	0 \leq \langle \nabla f(\hat x), x-\hat x \rangle \leq f(x) - f(\hat x).
\end{equation*}
Hence, $f(\hat x) \leq f(x)$ for any feasible $x$. Therefore, $\hat x$ is a solution of \linebreak $CO(f, A, b, K)$.
\end{proof}

\section{Final remarks}

This short note presented an explicit connection between conic optimization and complementarity problems, connection which comes from the 
complementary slackness relation of the Karush-Kuhn-Tucker conditions. Although the complementary slackness suggests that such a
connection should exist, it is difficult to find it explicitly in the literature. Hopefully, this short note will be a useful reference for
some readers.



\bibliographystyle{plain}
\bibliography{RefCC}
\end{document}